\definecolor{foge}{rgb}{0.1, 0.6, 0.1}
\numberwithin{equation}{section}
\newtheorem{theo}{Theorem}[section]
\newtheorem{prop}[theo]{Proposition}
\newtheorem{lem}[theo]{Lemma}
\newtheorem{rem}[theo]{Remark}
\newtheorem{ex}[theo]{Example}
\theoremstyle{definition}
\newcommand{\la}{\lambda}
\newcommand{\R}{\mathcal{R}}
\newcommand{\Z}{\mathbb{Z}}
\title{The arithmetical combinatorics of $k,l$-regular partitions}
\author{Isaac Konan}
\address{Universit\'e de Lyon, Universit\'e Claude Bernard Lyon 1, UMR5208, Institut Camille Jordan, F-69622 Villeurbanne, France}
\email{konan@math.univ-lyon1.fr}
\thanks{This work was supported by the LABEX MILYON (ANR-10-LABX-0070) of Universit\'e de Lyon, within the program ''Investissements d'Avenir" (ANR-11-IDEX-0007) operated by the French National Research Agency (ANR).}
\keywords{Integer partitions, regular partitions, Glaisher's identity}
\begin{document}
      
\begin{abstract}
For all positive integers $k,l,n$, the Little Glaisher theorem states that the number of partitions of $n$ into parts not divisible by $k$ and occurring less than $l$ times is equal to the number of partitions of $n$ into parts not divisible by $l$ and occurring less than $k$ times. While this refinement of Glaisher theorem is easy to establish by computation of the generating function, there is still no one-to-one canonical correspondence explaining it. Our paper brings an answer to this open problem through an arithmetical approach. Furthermore, in the case $l=2$, we discuss the possibility to construct a Schur-type companion of the Little Glaisher theorem  via the weighted words.
\end{abstract}

%%%%%%%%%%%%%%%%%%%%%%%%%

\maketitle

%%%%%%%%%%%%%%%%%%%%%%%%%

\section{Introduction}

An integer partition is a finite non-increasing  sequence of positive integers, called parts of the partition. The weight of an integer partition consists of the sum its parts. In this paper, we enumerate the partitions according to the number of occurrences of positive integers and write the partition in the form $\la = 1^{f_1}2^{f_2}\cdots$. The sequence  $(f_i)_{i\geq 1} $ is called the frequency sequence of $\la$, and the number of occurrences of the part $i$ is referred to as the frequency of $i$.

For  a positive integer $k$, a $k$-regular partition is a partition with no part divisible by $k$, i.e. $f_{ik}=0$ for all $i\geq 1$. This notion of regularity is related to the partition theory, whereas in the group theory, a $k$-regular partition is a partition into parts occurring less than $k$ times, i.e. $f_i<k$ for all $i\geq 1$.
However these two notions of $k$-regularity are closely related. The relation is stated in the following result due to Glaisher \cite{G83}. 

\begin{theo}[Glaisher theorem]\label{theo:glaisher}
At fixed weight, there are as many $k$-regular partitions in terms of partition theory as in terms group theory. The corresponding $q$-series is 
$$
\prod_{k\nmid  i} \frac{1}{1-q^i}=\frac{(q^k;q^k)_\infty}{(q;q)_\infty}= \prod_{i\geq 1} (1+q^i+\cdots+q^{(k-1)i}),
$$
where $(a;q)_n=\prod_{i=0}^{n-1}(1-aq^i)$ for $n\in \Z_{>0}\cup \{\infty\}$.
\end{theo}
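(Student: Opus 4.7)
The plan is to establish the theorem via a direct computation of generating functions, following the chain of identities displayed in the statement, and interpreting each of the three products as the weight generating function of a natural class of partitions.

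First I would set up the two generating functions one wants to equate. For partition-theoretic $k$-regular partitions, the condition $f_{ik}=0$ means $\la$ has a free frequency sequence $(f_i)_{k\nmid i}$ with each $f_i\in\Zo$; summing the geometric series in each factor yields
$$\sum_{\la \text{ of this type}} q^{|\la|} = \prod_{k\nmid i}\sum_{f_i\geq 0} q^{if_i}=\prod_{k\nmid i}\frac{1}{1-q^i}.$$
For group-theoretic $k$-regular partitions, the constraint $f_i<k$ for all $i\geq 1$ means each frequency independently takes values in $\{0,1,\dots,k-1\}$, so
$$\sum_{\la \text{ of this type}} q^{|\la|} = \prod_{i\geq 1}\bigl(1+q^i+q^{2i}+\cdots+q^{(k-1)i}\bigr).$$

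Next I would check the two equalities linking these products to $(q^k;q^k)_\infty/(q;q)_\infty$. For the left equality, reintroduce the missing factors indexed by multiples of $k$:
$$\prod_{k\nmid i}\frac{1}{1-q^i} = \frac{\prod_{i\geq 1}(1-q^{ki})}{\prod_{i\geq 1}(1-q^i)} = \frac{(q^k;q^k)_\infty}{(q;q)_\infty}.$$
For the right equality, apply the elementary factorization $1-x^k=(1-x)(1+x+\cdots+x^{k-1})$ with $x=q^i$ inside each factor of the numerator, which cancels the denominator $1-q^i$ termwise and produces the third product.

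Putting the three equal generating functions side by side yields the bijective count at fixed weight. The whole argument lives in $\Z[[q]]$, which is legitimate because the coefficient of $q^n$ in any of the three products involves only finitely many factors (those indexed by $i\leq n$). I do not anticipate a genuine obstacle: as the abstract itself stresses, the combinatorial subtlety of the paper lies not in Glaisher's identity but in finding a canonical bijection for the finer two-parameter refinement, and the generating-function proof above is standard.
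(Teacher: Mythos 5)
Your generating-function argument is correct and complete: the two product interpretations, the identity $\prod_{k\nmid i}(1-q^i)^{-1}=(q^k;q^k)_\infty/(q;q)_\infty$, the factorization $1-q^{ki}=(1-q^i)(1+q^i+\cdots+q^{(k-1)i})$, and the remark about convergence in $\Z[[q]]$ are all in order. However, this is not the route the paper takes. The paper establishes Theorem \ref{theo:glaisher} through the explicit Glaisher bijection $\Phi_k$ of Section \ref{sec:Glaisherbijection}: repeatedly replace $k$ copies of a part $i$ by one part $ik$, and then show, via the invariants $S_i(t)=\sum_{h\geq 0}f_{i\cdot k^h}(t)\,k^h$, that the process terminates and that the final frequencies are exactly the digits of the base-$k$ expansion of the original frequencies, so the map is well defined (independent of the order of the moves) and invertible. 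The trade-off is this: your proof is shorter and suffices for the equinumerosity claim and the displayed $q$-series identity, but it produces no explicit correspondence, whereas the paper needs the map $\Phi_k$ and its base-$k$ description as a building block for everything that follows --- the coprime case uses $\Phi_k\circ\Phi_l^{-1}$ together with the uniqueness of base-$k$ digits to see that $l$-regularity is preserved, and the main bijection for the Little Glaisher theorem is assembled from such maps. So your proof is valid as a standalone verification of the statement, but it would not support the rest of the paper's construction; if you intend it as a replacement for Section \ref{sec:Glaisherbijection}, that is a genuine loss rather than a simplification.
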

In a 2019 paper \cite{KX19}, Keith and Xiong give a refinement of a Sylvester-style bijection \cite{Sy82} of Glaisher's identity due to Stockhofe \cite{Sto82}. We latter generalize this refinement in terms of weighted words in \cite{IK221}.

In the remainder of the paper, we refer to the $k$-regularity as in terms of the partition theory. Back to his original paper, Glaisher gave in a bijection that allows us to link the notion of $k$-regularity to the decomposition of integers in basis-$k$ (see Section \ref{sec:Glaisherbijection}). In this paper, we use a similar approach to prove bijectively an interesting refinement of Glaisher's identity, called the Little Glaisher theorem.
Let $k,l$ be two positive integers. Define a $k,l$-regular partition to be a partition with parts not divisible by $k$ and which occur less than $l$ times, i.e. $f_{ik}=0$ and $f_{i}<l$ for all $i\geq 1$. Denote by $\R_{k,l}$ the set of $k,l$-regular partitions.

\begin{theo}[Little Glaisher theorem]
\label{theo:main}
At fixed weight, there are as many $k,l$-regular partitions as  $l,k$-regular partitions. The corresponding $q$-series is 
$$
\prod_{k\nmid  i} (1+q^i+\cdots+q^{(l-1)i})=\frac{(q^k;q^k)_\infty(q^l;q^l)_\infty}{(q;q)_\infty(q^{kl};q^{kl})_\infty}= \prod_{l\nmid  i} (1+q^i+\cdots+q^{(k-1)i}).
$$
\end{theo}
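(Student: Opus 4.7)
The proof has two components: the numerical chain of equalities among the three expressions, and a canonical arithmetical bijection $\Phi: \R_{k,l}\to\R_{l,k}$.

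The numerical identity is a direct computation. Writing $1+q^i+\cdots+q^{(l-1)i} = (1-q^{li})/(1-q^i)$ splits the leftmost product into a quotient; the denominator $\prod_{k\nmid i}(1-q^i) = (q;q)_\infty/(q^k;q^k)_\infty$ comes from factoring out $(q^k;q^k)_\infty$, while the numerator $\prod_{k\nmid i}(1-q^{li})$ equals $(q^l;q^l)_\infty/(q^{kl};q^{kl})_\infty$ after the re-indexing $j=li$ (as $i$ ranges over $\{i : k\nmid i\}$, $j$ ranges over $\{j : l\mid j,\ kl\nmid j\}$). This yields the middle expression, which is manifestly symmetric in $k$ and $l$, so the same manipulation with $k$ and $l$ swapped produces the rightmost product.

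For the bijection I would imitate Glaisher's original base-$k$ argument as outlined in \Sct{sec:Glaisherbijection}. Given $\la\in\R_{k,l}$, decompose each admissible part uniquely as $l^b m$ with $l\nmid m$; the condition $k\nmid l^b m$ restricts $b$ to a finite set $B(m)\subseteq\Zo$ of allowed shifts, and the bound $f_{l^b m}<l$ means that the tuple $(f_{l^b m})_{b\in B(m)}$ consists precisely of the base-$l$ digits of a nonnegative integer $F_m := \sum_{b\in B(m)} l^b f_{l^b m}$. The total weight reorganizes as $|\la| = \sum_m m F_m$ over the set of primitives $m$ (those with $l\nmid m$ and $B(m)\neq\emptyset$). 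Symmetrically, each $\mu\in\R_{l,k}$ is determined by integers $G_{m'}$ indexed over primitives $m'$ with $k\nmid m'$, via base-$k$ expansion over admissible shift sets $A(m')$, with weight $\sum_{m'} m' G_{m'}$. The bijection $\Phi$ is then the identification $F_m \leftrightarrow G_{m'}$ under a matching of the two sets of primitives.

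The main obstacle is precisely this matching. When $\gcd(k,l)=1$, the primitives on both sides coincide with the integers coprime to $kl$, the shift sets all equal $\Zo$, and $\Phi$ reduces to the literal identity $G_m = F_m$, so the proof is immediate. In general, however, the sets $B(m)$ and $A(m')$ are truncated by the constraints $k\nmid l^b m$ and $l\nmid k^a m'$, and the primitives are genuinely different sets of integers. The technical heart of the argument will therefore be a uniform encoding of each integer as a pair (primitive, base-expanded frequency) that simultaneously respects both the $k,l$- and $l,k$-regularity constraints, redistributing the prime powers of $d=\gcd(k,l)$ appropriately between the base-$l$ and base-$k$ portions of the data. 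Once this encoding is in place, weight preservation and invertibility of $\Phi$ follow automatically from the uniqueness of the base expansions used on each side.
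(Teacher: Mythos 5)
Your first paragraph (the $q$-series manipulation) is correct: writing $1+q^i+\cdots+q^{(l-1)i}=(1-q^{li})/(1-q^i)$ and re-indexing the numerator product does give the middle expression, whose symmetry in $k$ and $l$ proves the equinumerosity claim as literally stated. Note, though, that this is exactly the computational route the paper deliberately sets aside (its stated goal is a canonical one-to-one correspondence), so the substance of the comparison lies in your proposed bijection.

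There the proposal has a genuine gap, and it sits precisely where the paper's actual work is. Your coprime case is fine and coincides with the paper's: collapsing the parts $l^b m$ via base-$l$ digits and re-expanding in base $k$ is nothing but $\Phi_k\circ\Phi_l^{-1}$, and Gauss's lemma guarantees the image lies in $\R_{l,k}$. But for $\gcd(k,l)>1$ you only name the difficulty (the primitives and admissible shift sets differ on the two sides) and then defer it to a ``uniform encoding \dots redistributing the prime powers of $\gcd(k,l)$,'' which is never constructed; the claim that weight preservation and invertibility would then ``follow automatically'' cannot be assessed without it. The paper resolves this not by a single encoding but by a reduction: write $k=k_1\cdots k_r$ and $l=l_1\cdots l_s$ so that every pair $(k_u,l_v)$ is either equal or coprime (the prime factorizations always work), and use the mixed-radix decomposition of Lemma \ref{lem:decomposition} --- applied both to the frequencies $f_i<l$ (digits $\beta_v^{(i)}$ in the radices $l_1,\ldots,l_s$) and to the parts $i$ not divisible by $k$ (unique form $\gamma K_u$ with $k_u\nmid\gamma$) --- to build a bijection $\Psi_{k,l}$ from $\R_{k,l}$ onto $\prod_{u,v}\R_{k_u,l_v}$ satisfying $|\la|=\sum_{u,v}K_uL_v|\la_{u,v}|$. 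One then applies $\Phi_{k_u}\circ\Phi_{l_v}^{-1}$ in each component (the identity when $k_u=l_v$, the coprime case otherwise) and reassembles with $\Psi_{l,k}^{-1}$; weight preservation follows from the scaling factors $K_uL_v$ being the same on both sides, not automatically from base-expansion uniqueness. To complete your argument you would need either to carry out this factor-by-factor reduction or to exhibit explicitly the simultaneous encoding you postulate; as written, the bijective half stops at the statement of the problem the paper solves.
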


The remainder of the paper is organized as follows. We first present the Glaisher bijection for Theorem \ref{theo:glaisher}, and show the connection to the decomposition in basis-$k$. After that, in Section \ref{sec:k2}, we show the machinery of our bijection in the case $k=2$, and then, in the same spirit in Section \ref{sec:mainbijection}, we generalize it to prove Theorem \ref{theo:main}.
Finally, in Section \ref{sec:conclusion}, we discuss the possibility to obtain a Schur-type companion using weighted words in the case $l=2$.

%%%%%%%%%%%%%%%%%%%%%%%%%

\section{The Glaisher bijection}\label{sec:Glaisherbijection}

In this section, we first present the Glaisher bijection show the relation to the decomposition in basis-$k$ of the frequencies of the parts occurring in the $k$-regular partitions.

\subsection{``One should not appear $k$ times''}

The Glaisher bijection is rather simple to implement. In the following, we denote by $\Phi_k$ the Glaisher weight-preserving bijection from the set of $k$-regular partitions to the set of partitions into parts occurring less than $k$ times.

Note that the map is trivial for $k=1$, since the only partition being $1$-regular is the empty partition, and all positive integers have their frequency equal to $0$. We now suppose that $k\geq 2$. 

Let $\la$ be a partition whose frequency sequence $(f_i)_{i\geq 1}$ satisfies $f_{ik}=0$ for all $i\geq 1$. \textbf{As long as} there exists $i\geq 1$ such that $f_i\geq k$, do the transformation
\begin{align}
f_i & \mapsto f_i-k,\label{eq:sub}\\
f_{ik} & \mapsto f_{ik}+1\label{eq:add}.
\end{align} 
Equivalently, this means that $k$ parts equal to $i$ turn into a single part $ik$.
Observe that the iterations stop when $f_i<k$ for all $i\geq 1$, and we set $\Phi_k(\la)$ to be the resulting partition. It may not seem obvious that the choice of $i$ at each step does not affect the final result, but this will be clear once the link to the decomposition in basic-$k$ is established.
 
The inverse bijection is built as follows. For a partition satisfying $f_i<k$ for all $i\geq 1$, as long as there is a positive integer $i$ such that $f_{ik}>0$, do 

\begin{align*}
f_{ik} & \mapsto f_{ik}-1,\\
f_i & \mapsto f_i+k.
\end{align*}

One may notice that the iterations stop when $f_{ik}=0$ for all $i\geq 1$.

\subsection{Dissection in basis-$k$}

We now analyze the transformations occurring during the bijection $\Phi_k$. To clarify the notation, set $f_i(t)$ to be the frequency of the part $i$ after $t$ applications of \eqref{eq:sub},\eqref{eq:add}. Hence, $f_i(0)$ equals $f_i$, the initial frequency of the part $i$ in $\la$. For all $i$ not divisible by $k$, set 
$$S_i(t)=\sum_{h\geq 0} f_{i\cdot k^h}(t) \cdot k^h.$$
Thus, $S_i(0)=f_i$ as $f_{i\cdot k^u} =0$ for all $u\geq 1$.
One may observe that any integer $j\geq 1$ can be uniquely written as a product $i\cdot k^u$ where $k\nmid i$. More precisely, $k^u$ is the largest power of $k$ that divides $j$. Suppose now that the $t^{th}$ transformation turns $m$ parts $j$ into a single part $jk$. Hence, by \eqref{eq:sub},\eqref{eq:add},

\begin{equation}
\label{eq:stept}
\begin{cases}
f_j(t)=f_j(t-1)-k,\\
f_{jk}(t)=f_{jk}(t-1)+1,\\
f_{h}(t)=f_h(t-1) \ \ \text{for} \ \ h\neq j,jk.
\end{cases}
\end{equation}
Writing $j=i\cdot k^u$ with $k\nmid i$, we then have that $S_{g}(t)=S_g(t-1)$ for all $g\neq i$ not divisible by $k$. Moreover, by \eqref{eq:stept},
\begin{align*}
   S_i(t) &= \sum_{h\geq 0} f_{i\cdot k^h}(t) \cdot k^h\\
   &= f_{i\cdot k^u}(t)\cdot k^u+ f_{i\cdot k^u}(t)\cdot k^{u+1} +\sum_{h\neq u, u+1 } f_{i\cdot k^h}(t) \cdot k^h \\
   &= (f_{i\cdot k^u}(t-1)-k)\cdot k^u+ (f_{i\cdot k^u}(t-1)+1)\cdot k^{u+1} +\sum_{h\neq u, u+1 } f_{i\cdot k^h}(t-1) \cdot k^h \\
   &= f_{i\cdot k^u}(t-1)\cdot k^u+ f_{i\cdot k^u}(t-1)\cdot k^{u+1} +\sum_{h\neq u, u+1 } f_{i\cdot k^h}(t-1) \cdot k^h\\
\end{align*}
so that $S_i(t)=S_i(t-1)$.  Hence, for all $i$ not divisible by $k$, $S_i(t)=f_i$ for all $t\geq 0$.
Finally, as 
$$\sum_{i\geq 1} f_i(t) = -k+1 + \sum_{i\geq 1} f_i(t-1),$$
then $$0 \leq \sum_{i\geq 1} f_i(t) = -t(k-1)+\sum_{i\geq 1} f_i$$
so that $t\leq \frac{\sum_{i\geq 1} f_i}{k-1}$. This ensures that the iterations stop at some finite step $T$. Furthermore, for all $k\nmid i$,
$$f_i = S_i(T)=\sum_{h\geq 0} f_{i\cdot k^h}(T) \cdot k^h$$
with $0\leq f_{i\cdot k^h}(T)<k$ for all $h\geq 0$. This is exactly the decomposition of $f_i$ in basis-$k$, which, we recall, is unique. The frequency of $i\cdot k^u$ is  then the coefficient of $k^u$ in basis-$k$ of the frequency of $i$.

Inversely, for a partition $\mu$ whose frequency sequence $(g_i)_{i\geq 1}$ satisfies $g_i<k$ for all $i\geq 0$, the partition $\Phi_k^{-1}(\mu)$ is such that the frequency of $k\nmid i$ equals
$$\sum_{h\geq 0} g_{i\cdot k^h}\cdot k^h,$$
and the frequency of $ik$ equals $0$ for all $i\geq 1$.

\section{The case $k=2$ of the Little Glaisher theorem}\label{sec:k2}

For $k=2$ and $l\geq 1$, Theorem \ref{theo:main} states that, at fixed weight, there are as many partitions into odd parts and occurring less than $l$ times as partitions into distinct parts not divisible by $l$. In the remainder of this section, we use the decomposition $l=2^p\cdot o$ obtained by extracting its largest divisor which is a power of $2$.

\begin{ex}For $l=5,6,8$, we respectively have $l=2^0\cdot 5$, $l=2^1\cdot 3$ and $l=2^3\cdot 1$. 
\end{ex}

We now build a weight-preserving bijection in three steps. 

\subsection*{Step 1}

Let $\la$ be a partition into of odd parts occurring less than $l$ times.
For $i\geq 1$, let $f_{2i-1}$ be the frequency of $2i-1$ and write 
$$f_{2i-1} = \sum_{j=1}^{p+1}\beta_{j}^{(2i-1)}\cdot 2^{j-1}$$
with $\beta_{p+1}^{(2i-1)}=\lfloor f_i/2^p\rfloor$ and the $\beta_1^{(2i-1)},\ldots, \beta_p^{(2i-1)} \in \{0,1\}$ the coefficients in the binary decomposition of $f_i-2^p\lfloor f_i/2^p\rfloor$. Note that $\beta_{p+1}^{(2i-1)}$ is less than $o$. For $j\in \{1,\ldots,p+1\}$, set $\la_{1,j}$ to be the partition consisting of odd multiples of $2^{j-1}$ such that the part $2^{j-1}(2i-1)$ occurs $\beta^{(2i-1)}_j$ times for $i\geq 1$.

Inversely, let $(\la_{1,1},\ldots, \la_{1,p+1})$ be a $p+1$-uplets of partitions such that, for $j\in \{1,\ldots,p+1\}$, the partition  $\la_{1,j}$ consists of odd multiples of $2^{j-1}$ occurring at most once if $j\leq p$ and at most $o-1$ otherwise.
Then, set $\la$ to be the partition into odd part such that $2i-1$ appears $\sum_{j=1}^{p+1}\beta_{j}^{(2i-1)}\cdot 2^{j-1}$ times, where $\beta_{j}^{(2i-1)}$ is the frequency of $2^{j-1}\cdot (2i-1)$ in $\la_{1,j}$. Hence, $\la$ is a partition into odd parts occurring less than $l$ times.

\begin{ex}\label{ex}
Let $l=6=2\cdot 3$, and $\la = 1^23^55^3$. We then have 
$$f_1=0\cdot 1 + 1\cdot 2, \quad f_3 = 1\cdot 1+ 2\cdot 2 \text{ and} \quad f_5=1\cdot 1+1\cdot 2,$$
and obtain $\la_{1,1}=3^15^1$, and $\la_{1,2}=2^16^210^1$.
\end{ex}

\subsection*{Step 2}

Let $\nu_{1,p+1}$ be the partition consisting of the parts of $\la_{1,p+1}$ divided by $2^p$, i.e. the part $2i-1$ occurs $\beta^{(2i-1)}_{p+1}$ for $i\geq 1$. By applying $\Phi_o^{-1}$ on $\nu_{1,p+1}$, we obtain a partition into odd parts not divisible by $o$, since $o$ is odd and an odd $2i-1$ is a multiple of $o$ if and only $(2i-1)/o$ is an odd integer.
Then, by applying $\Phi_2$ to $\Phi_o^{-1}(\nu_{1,p+1})$, we get a partition into distinct parts not divisible by $o$, since a number $i$ is not divisible $o$ if and only $2i$ is not divisible by $o$.

Inversely, by applying $\Phi_o\circ \Phi_2^{-1}$ on any partition into distinct parts not divisible by $o$, we obtain a partition into odd part occurring less than $o$ times.

We finally set $\mu_{1,p+1}$ to be the partitions consisting of multiples of $2^p$ not divisible by $o$, such that, for $i$ not divisible by $o$, the part $2^p i$ occurs as many times as the part $i$ occurs in $\Phi_2(\Phi_o^{-1}(\nu_{1,p+1}))$. The partition $\mu_{1,p+1}$ has then distinct parts divisible by $2^p$ but not divisible by $2^p\cdot o=l$.

We also set $\mu_{1,j}=\la_{1,j}$ for all $j\in \{1,\ldots,p\}$, which consists of distinct odd multiples of $2^{j-1}$, thus not divisible by $2^p\cdot o=l$.
 
\begin{ex}
With the example \ref{ex}, we obtain $\mu_{1,1}=3^15^1$ and $\mu_{1,2}=2^14^18^110^1$.
\end{ex}

\subsection*{Step 3}

The final image is the partition $\mu$ consisting of the parts of all the partitions $\mu_{1,j}$ for $j\in \{1,\ldots,p+1\}$, which we recall are not divisible by $l$. 

Inversely, any partition into distinct parts not divisible by $l$ can be split into $p+1$ partitions by gathering the parts according to the largest power of $2$ in $\{1=2^0,\ldots 2^p\}$ dividing them. 

\begin{ex}
The final image of $\la$ of example \ref{ex} is $\mu=2^13^14^15^18^110^1$.
\end{ex}

\begin{ex}
The full scope of the bijection on the $2,6$-partitions of $10$ is given in the following table:

$$
\begin{array}{|c|c|c|c|}
\hline
\la\in \R_{2,6}& (\la_{1,1},\la_{1,2})& (\mu_{1,1},\mu_{1,2})&\mu\\
\hline\hline
1^19^1&(1^19^1,\emptyset)&(1^19^1,\emptyset)&1^19^1\\
3^17^1&(3^17^1,\emptyset)&(3^17^1,\emptyset)&3^17^1\\
1^37^1&(1^17^1,2^1)&(1^17^1,2^1)&1^12^17^1\\
5^2&(\emptyset,10^1)&(\emptyset,10^1)&10^1\\
1^23^15^1&(3^15^1,2^1)&(3^15^1,2^1)&2^13^15^1\\
1^55^1&(1^15^1,2^2)&(1^15^1,4^1)&1^14^15^1\\
1^13^3&(1^13^1,6^1)&(1^13^1,2^14^1)&1^12^13^14^1\\
1^43^2&(\emptyset,2^26^1)&(\emptyset,2^18^1)&2^18^1\\
\hline
\end{array}\,.
$$ 

\end{ex}

\begin{rem}
For $o=1$, $\la_{1,p+1}=\mu_{1,p+1}=\emptyset$ whatever the choice of the $2,2^p$-regular partition $\la$.
\end{rem}

\section{Bijection for the Little Glaisher theorem}
\label{sec:mainbijection}

In this section, a bijection for Theorem \ref{theo:main} which generalizes the map of Section \ref{sec:k2} is built.
We first connect the Glaisher bijection to the key case where $k$ and $l$ are co-prime. Then, we construct a suitable decomposition of $k,l$-regular partitions based on some arithmetical properties of $k$ and $l$. Finally, we combine this decomposition to the Glaisher bijection and obtain a map which matches bijectively the $k,l$-regular partitions and the $l,k$-regular partitions.

\subsection{The case $\gcd(k,l)=1$} \label{sec:coprime}

In the case $\gcd(k,l)=1$, at fixed weight, the $l,k$-regular partitions are indeed equinumerous to the partitions which are $k$-regular and $l$-regular. 

\begin{prop} 
For $\gcd(k,l)=1$ and a fixed weight, there are as many $l,k$-regular partitions as partitions into parts not divisible neither by $k$ nor by $l$.
\end{prop}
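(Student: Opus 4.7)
The plan is to realize this as an immediate restriction of the Glaisher bijection $\Phi_k$ from Section \ref{sec:Glaisherbijection}. Let $\Dd_{k,l}$ denote the set of partitions whose parts are divisible neither by $k$ nor by $l$ (no frequency restriction). I want to show that $\Phi_k$ restricts to a weight-preserving bijection $\Dd_{k,l} \to \R_{l,k}$.

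First, let $\la \in \Dd_{k,l}$. Since $\la$ is in particular $k$-regular, $\Phi_k(\la)$ has all frequencies strictly less than $k$. I need to check that no part of $\Phi_k(\la)$ is divisible by $l$. Recall from the analysis of Section \ref{sec:Glaisherbijection} that, writing each integer uniquely as $j = i\cdot k^u$ with $k \nmid i$, the frequency of $j$ in $\Phi_k(\la)$ equals the coefficient of $k^u$ in the base-$k$ expansion of $f_i(\la)$. In particular, any part $j$ appearing in $\Phi_k(\la)$ has the form $i\cdot k^u$ where $i$ is a part of $\la$. Since $\gcd(k,l)=1$ and $k\nmid i$, we have $l \mid i\cdot k^u$ if and only if $l \mid i$; but $\la \in \Dd_{k,l}$ ensures $l \nmid i$, hence $l \nmid j$. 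Thus $\Phi_k(\la) \in \R_{l,k}$.

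Conversely, let $\mu \in \R_{l,k}$. Applying $\Phi_k^{-1}$ produces a $k$-regular partition whose parts are exactly the integers $i$ with $k\nmid i$ such that $g_{i\cdot k^u}(\mu)>0$ for some $u\geq 0$. Since every part of $\mu$ is not divisible by $l$, the same gcd argument as above yields $l \nmid i$ for each such $i$, so $\Phi_k^{-1}(\mu) \in \Dd_{k,l}$. Weight preservation is inherited from $\Phi_k$.

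The generating function identity $\prod_{k\nmid i,\, l\nmid i}\frac{1}{1-q^i} = \prod_{l\nmid i}(1+q^i+\cdots+q^{(k-1)i})$ then gives an analytic double-check: the left-hand side is the generating series for $\Dd_{k,l}$, and an easy manipulation using $1-q^{ki} = (1-q^i)(1+q^i+\cdots+q^{(k-1)i})$ and $\gcd(k,l)=1$ (so that $\{ki : l\nmid i\} = \{m : k\mid m,\, l\nmid m\}$) transforms it into the right-hand side, which enumerates $\R_{l,k}$. There is no real obstacle here: the only point deserving care is the equivalence ``$l \mid i\cdot k^u \iff l\mid i$'' for $k\nmid i$, which is exactly where the coprimality hypothesis $\gcd(k,l)=1$ enters.
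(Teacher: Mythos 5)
Your proposal is correct and follows essentially the same route as the paper: both restrict the Glaisher bijection $\Phi_k$ and use the coprimality observation (Gauss's lemma) that $l\mid i\cdot k^u$ if and only if $l\mid i$ when $k\nmid i$, together with the base-$k$ description of $\Phi_k$, to see that the image is $l,k$-regular exactly when the preimage avoids multiples of $l$. The generating-function check at the end is a harmless extra not needed for the argument.
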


\begin{proof}
Using the Glaisher bijection $\Phi_k$ on a $k$-regular partition, we obtain a partition such that the frequency of $i\cdot k^u$ is the coefficient of $k^u$ in basis-$k$ of the frequency of $i$, for any part $i$ not divisible by $k$.
By the Gauss lemma, $l \mid i\cdot k^{u}$ if and only if $l\mid i$. Moreover, the decomposition in basis-$k$ being unique, the coefficient are all equal to $0$ if and only if the decomposed number is equal to $0$. Hence, the image by $\Phi_k$ is a $l,k$-regular partition if and only the initial $k$-regular partition is also $l$-regular.
\end{proof}

For $\gcd(k,l)=1$, by the above proposition, the map $\Phi_k\circ\Phi_l^{-1}$ defines a weight-preserving bijection from the set of $k,l$-regular partitions to the set of  $l,k$-regular partitions.

\subsection{The case $\gcd(k,l)>1$}

\subsubsection{Preliminaries}

In the remainder of this paper, an empty product is conventionally equal to $1$. In this part, we introduce a decomposition depending of some arithmetical properties of integers. 

\begin{lem}\label{lem:decomposition}
Suppose that the positive integer $d$ can be written as a product $t$ positive integers $d_1,\ldots,d_t$. We then have the following.
\begin{enumerate}
\item The function 
$$(\beta_1,\ldots,\beta_t)\mapsto\sum_{j=1}^{t} \beta_j \prod_{u=1}^{j-1}d_u$$
defines a bijection from the set product 
$$\{0,\ldots,d_1-1\}\times \cdots\times \{0,\ldots,d_j-1\}$$ to $\{0,\ldots,d-1\}$.
\item Any integer $d\nmid i$ can be uniquely written in the form
$$\gamma \prod_{u=0}^{j-1}d_u \text{ for some } 1\leq j \leq t  \text{ and } d_j\nmid \gamma.$$
Inversely, any integer of this form is not divisible by $d$. Hence, 
$$\Z\setminus d\Z = \bigsqcup_{j=1}^t \left(\prod_{u=1}^{j-1}d_u\right) (\Z\setminus d_j\Z).$$
\end{enumerate}
\end{lem}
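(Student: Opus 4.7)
The plan is to treat both items as mixed-radix-style decompositions induced by the factorisation $d = d_1 d_2 \cdots d_t$, handling (1) first and then leveraging the same arithmetic in (2). Throughout, the empty product convention gives $\prod_{u=1}^{0} d_u = 1$, which is needed to cover the $j=1$ cases cleanly.

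For item (1), I would argue by induction on $t$. The base case $t=1$ is tautological. For the inductive step, since the source $\{0,\ldots,d_1-1\}\times \cdots \times \{0,\ldots,d_t-1\}$ and the target $\{0,\ldots,d-1\}$ have the same cardinality $d_1 \cdots d_t = d$, it suffices to prove injectivity. If
\[
\sum_{j=1}^{t} \beta_j \prod_{u=1}^{j-1}d_u = \sum_{j=1}^{t} \beta_j' \prod_{u=1}^{j-1}d_u,
\]
reducing modulo $d_1$ forces $\beta_1 = \beta_1'$ because both lie in $\{0,\ldots,d_1-1\}$. Subtracting the $j=1$ terms, dividing by $d_1$, and applying the induction hypothesis to the factorisation $d/d_1 = d_2 \cdots d_t$ yields $\beta_j = \beta_j'$ for all $j$.

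For item (2), existence uses the chain $d_1 \mid d_1 d_2 \mid \cdots \mid d_1 \cdots d_t = d$. Given $d \nmid i$, let $j$ be the smallest index in $\{1,\ldots,t\}$ such that $\prod_{u=1}^{j} d_u \nmid i$; this exists since $d \nmid i$. By minimality, $\prod_{u=1}^{j-1} d_u \mid i$, so $i = \gamma \prod_{u=1}^{j-1} d_u$ for some integer $\gamma$, and $d_j \nmid \gamma$ by the defining property of $j$. Conversely, if $i = \gamma \prod_{u=1}^{j-1} d_u$ with $d_j \nmid \gamma$, then $d \mid i$ would yield $d_j d_{j+1} \cdots d_t \mid \gamma$, contradicting $d_j \nmid \gamma$, so $d \nmid i$. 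For uniqueness, suppose $\gamma \prod_{u=1}^{j-1} d_u = \gamma' \prod_{u=1}^{j'-1} d_u$ with $d_j \nmid \gamma$, $d_{j'} \nmid \gamma'$, and $j \leq j'$; if $j < j'$, then $\prod_{u=1}^{j} d_u$ divides the right-hand side, hence the left-hand side, forcing $d_j \mid \gamma$, a contradiction; so $j = j'$ and $\gamma = \gamma'$. The disjoint-union formula is then a direct rewriting of this existence-and-uniqueness statement.

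I do not expect any serious obstacle: the arguments are entirely elementary divisibility. The only place where care is required is the bookkeeping around the empty product when $j = 1$, in particular the edge case of (2) where $d_1 \nmid i$ itself and one must take $\gamma = i$ with the empty product $\prod_{u=1}^{0} d_u = 1$.
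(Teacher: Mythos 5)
Your proof is correct and takes essentially the same route as the paper: for item (1) both arguments rest on the cardinality count $d_1\cdots d_t=d$ and then verify a single direction (you establish injectivity by induction and reduction modulo $d_1$, while the paper establishes surjectivity by iterated Euclidean division), and for item (2) your choice of the first index $j$ with $\prod_{u=1}^{j}d_u\nmid i$, the converse divisibility argument, and the uniqueness step coincide with the paper's reasoning. No gaps.
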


\begin{proof}
$ $
\begin{enumerate}
    \item We first note that the sets $\{0,\ldots,d_1-1\}\times \cdots\times \{0,\ldots,d_j-1\}$ and $\{0,\ldots,d-1\}$ both have $d$ elements. To prove that the map describes a bijection, it suffices to show that it is surjective. Let $n_1$ be an integer in $\{0,\ldots,d-1\}$. Applying the Euclidean division by $d_1$, one can write $n_1 = \beta_1+d_1\times n_2$, with $\beta_1\in \{0,\ldots,d_1-1\}$ and $0\leq n_2 <d_2\cdots d_t$. Recursively on $1\leq j<t$, if $0\leq n_j <d_j \cdots d_t$, write $n_j = \beta_j + d_j \times n_{j+1}$ with $\beta_j\in \{0,\ldots,d_j-1\}$ and $0\leq n_{j+1}<d_{j+1}\cdots d_t$. Finally, set $\beta_t =n_t$. Then, 
    \begin{align*}
    n_1 &= \beta_1+ d_1\times (\beta_2+d_2\times(\cdots \times(\beta_{t-1}+d_{t-1}\times \beta_t)\cdots)\\
    &= \sum_{j=1}^{t} \beta_j \prod_{u=1}^{j-1}d_u.
    \end{align*}
    This ensures that the map is surjective, and we conclude. 
    
    \item The second statement is straightforward. In fact, as $1=\prod_{u=1}^{0}d_u\mid i$ and $d=\prod_{u=1}^{j}d_u\nmid i$, there exists a unique $1\leq j\leq t$ such that $\prod_{u=1}^{j-1}d_u \mid i$ and $\prod_{u=1}^{j}d_u \nmid i$. Equivalently, $\prod_{u=1}^{j-1}d_u \mid i$ and $d_j \nmid \gamma = i/(\prod_{u=1}^{j-1}d_u)$.
    Conversely, if $d\mid i$, then, for all $1\leq j\leq t$, 
    $$\frac{i}{\prod_{u=1}^{j-1}d_u} = \frac{i}{d}\cdot\prod_{u=j}^t d_u$$
    is divisible by $d_j$. Hence, the only integers that could be written in this form are those not divisible by $d$.
    
\end{enumerate}

\end{proof}

We now write $k=k_1\cdots k_r$ and $l=l_1 \cdots l_s$ in such a way that, for all $1\leq u\leq r$ and $1\leq v \leq s$, either $k_u=l_v$ or $\gcd(k_u,l_v)=1$. Such decomposition of $k$ and $l$ is always possible, the easiest one being the decomposition into primes (it is the least optimal decomposition without factor equal to $1$ in terms of number of factors). 
To ease the notations, note $K_u = \prod_{x=1}^{u-1} k_x$ and $L_v = \prod_{y=1}^{v-1} l_y$ for all $1\leq u\leq r$ and $1\leq v \leq s$. We first decompose our set of $k,l$-regular partitions into a set product of $k_u,l_v$-regular partitions with $1\leq u\leq r$ and $1\leq v \leq s$. 
 
\begin{prop} There is a bijection $\Psi_{k,l}$ from $\R_{k,l}$ to the set product $$\prod_{u=1}^r\prod_{v=1}^s \R_{k_u,l_v}$$
such that, for $\Psi_{k,l}(\la)=(\la_{u,v})_{u,v}$

\begin{equation}\label{eq:weight}
|\la| =  \sum_{u=1}^r \sum_{v=1}^s K_uL_v |\la_{u,v}|,
\end{equation}
where $|\cdot|$ denotes the weight function on the set of partitions.
\end{prop}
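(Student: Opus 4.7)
The plan is to apply Lemma \ref{lem:decomposition} twice, once to the part values and once to the frequencies. Applied to $k = k_1\cdots k_r$, statement (2) of the lemma says that every integer $i$ with $k\nmid i$ admits a unique representation $i = K_u\,\gamma$ with $1\leq u\leq r$ and $k_u \nmid \gamma$. Applied to $l = l_1\cdots l_s$, statement (1) says that every $f \in \{0,\ldots,l-1\}$ has a unique mixed-radix expansion $f = \sum_{v=1}^{s} \beta_v L_v$ with $\beta_v \in \{0,\ldots,l_v-1\}$. These two decompositions will together slice a $k,l$-regular partition $\la$ into the components $\la_{u,v}$.

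Concretely, given $\la\in\R_{k,l}$ with frequencies $(f_i)_{i\geq 1}$, I define $\la_{u,v}$ to be the partition in which each $\gamma$ with $k_u\nmid \gamma$ appears $\beta_v^{(K_u\gamma)}$ times, where $\beta_v^{(i)}$ denotes the $v$-th coefficient in the mixed-radix expansion of $f_i$. By construction, the parts of $\la_{u,v}$ are not divisible by $k_u$ and all its frequencies are strictly less than $l_v$, so $\la_{u,v} \in \R_{k_u, l_v}$. Conversely, given $(\la_{u,v})$ in the product set with $g_\gamma^{(u,v)}$ denoting the frequency of $\gamma$ in $\la_{u,v}$, I recover $\la$ by assigning to the integer $K_u\,\gamma$ the frequency $\sum_{v=1}^{s} g_\gamma^{(u,v)} L_v$, and zero frequency to integers divisible by $k$. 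Lemma \ref{lem:decomposition}(2) guarantees that distinct pairs $(u,\gamma)$ produce distinct parts $K_u\gamma$, and that together these exhaust the positive integers not divisible by $k$; Lemma \ref{lem:decomposition}(1) guarantees that the reconstructed frequencies lie in $\{0,\ldots,l-1\}$. Hence the two maps are mutually inverse, and $\Psi_{k,l}$ is a bijection.

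For the weight identity, splitting $\sum_i i f_i$ along the two decompositions yields
\[
|\la| \;=\; \sum_i i\, f_i \;=\; \sum_{u=1}^{r}\sum_{v=1}^{s}\sum_{\gamma :\, k_u\nmid \gamma}(K_u\gamma)\bigl(\beta_v^{(K_u\gamma)} L_v\bigr) \;=\; \sum_{u=1}^{r}\sum_{v=1}^{s} K_u L_v\,|\la_{u,v}|,
\]
which is exactly \eqref{eq:weight}.

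There is no deep obstacle here: the proposition is essentially an unpacking of Lemma \ref{lem:decomposition}, and the only thing to check is that the two independent decompositions --- of parts and of frequencies --- fit together consistently so that the reconstructed $\la$ indeed lies in $\R_{k,l}$. Notably, the assumption that each pair $(k_u,l_v)$ is equal or coprime plays no role at this stage; it only becomes relevant in the next step, where one applies Section \ref{sec:coprime} (or the identity map when $k_u = l_v$) factorwise to convert each $\R_{k_u,l_v}$ into $\R_{l_v,k_u}$ and thereby assemble the desired bijection $\R_{k,l}\to\R_{l,k}$.
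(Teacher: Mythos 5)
Your proposal is correct and follows essentially the same route as the paper: it uses part (1) of Lemma \ref{lem:decomposition} to expand the frequencies in the mixed radix $L_1,\ldots,L_s$ and part (2) to split the part values as $K_u\gamma$ with $k_u\nmid\gamma$, defines $\la_{u,v}$ from the coefficients $\beta_v^{(K_u\gamma)}$, reconstructs $\la$ in the inverse direction in the same way, and verifies \eqref{eq:weight} by the same double-sum computation. Your added remark that the condition ``$k_u=l_v$ or $\gcd(k_u,l_v)=1$'' is not needed at this stage is accurate and consistent with the paper, which only invokes it in the subsequent step of the main bijection.
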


\begin{proof}
Let $\la$ be a $k,l$-regular partition with frequency sequence $(f_i)_{i\geq 0}$. Using Lemma \ref{lem:decomposition}, for all $k\nmid i$, as $0\leq f_i<l$, write 
$$f_i = \sum_{v=1}^{s} \beta_v^{(i)} L_v \text{ with } 0\leq \beta_v^{(i)}<l_v \text{ for all } 1\leq v \leq s.$$
For all $1\leq u\leq r$ and $1\leq v \leq s$, set $\la_{u,v}$ to be the partition such that the frequency of $\gamma$ equals $\beta_v^{(\gamma K_u)}$  for all $k_u\nmid \gamma$, and the frequency of any multiple of $k_u$ equals $0$. We then set $\Psi_{k,l}(\la)=(\la_{u,v})_{u,v}$. By fact $(2)$ of Lemma \ref{lem:decomposition},
\begin{align*}
|\la| &= \sum_{k\nmid i} i\cdot f_i\\
&= \sum_{u=1}^r \sum_{k_u\nmid \gamma} \gamma K_u \cdot f_{\gamma K_u}\\
&= \sum_{u=1}^r \sum_{k_u\nmid \gamma} \gamma K_u \sum_{v=1}^s \beta_v^{(\gamma K_u)}L_v\\
 &= \sum_{u=1}^r \sum_{v=1}^s K_uL_v \sum_{k_u\nmid \gamma} \gamma \cdot \beta_v^{(\gamma K_u)}\\
  &= \sum_{u=1}^r \sum_{v=1}^s K_uL_v |\la_{u,v}|,
\end{align*}
and \eqref{eq:weight} holds. Inversely, let $(\la_{u,v})_{u,v}$ be a $k,l$-uplet of partitions  such that $\la_{u,v}$ is a $k_u,l_v$-regular partition for all $1\leq u\leq r$ and $1\leq v \leq s$. For $k_u\nmid \gamma$, let $\eta_{\gamma}^{u,v}$ be the frequency of $\gamma$ in $\la_{u,v}$. Define the partition $\la$ with frequency sequence $(f_i)_{i\geq 1}$ such that 
$$f_{\gamma K_u} = \sum_{v=1}^s \eta_{\gamma}^{u,v} L_v$$
and $f_{ik}=0$ for all $i\geq 1$. As by fact $(2)$ of Lemma \ref{lem:decomposition}, any integer not divisible by $k$ can be uniquely written in the form $\gamma K_u$  with $k_u\nmid \gamma$, we conclude that $\la$ equals $\Phi^{-1}((\la_{u,v})_{u,v})$.

\end{proof}

\subsubsection{The main bijection}

We now build the bijection from $\R_{k,l}$ to $\R_{l,k}$ in three steps.
The first step consists in applying $\Psi_{k,l}$ from $\R_{k,l}$ to $$\prod_{u=1}^r\prod_{v=1}^s \R_{k_u,l_v}.$$
Then, we apply $\Phi_{k_u}\circ \Phi^{-1}_{l_v}$ from $\R_{k_u,l_v}$ to $\R_{l_v,k_u}$ in the set product, and we reach 
$$\prod_{u=1}^r\prod_{v=1}^s \R_{l_v,k_u}.$$ Note that when $k_u=l_v$, $\Phi_{k_u}\circ \Phi^{-1}_{l_v}$ is the identity. Otherwise, $gcd(k_u,l_v)=1$ and Section \ref{sec:coprime} ensures that $\Phi_{k_u}\circ \Phi^{-1}_{l_v}$ defines a bijection from $\R_{k_u,l_v}$ to $\R_{l_v,k_u}$. 
Finally, we apply $\Psi_{l,k}^{-1}$ from  
$$\prod_{u=1}^r\prod_{v=1}^s \R_{l_v,k_u}$$ to 
$\R_{l,k}$. The bijection obtained is then
$$\Psi_{l,k}^{-1}\circ\left(\prod_{u=1}^r\prod_{v=1}^s \Phi_{k_u}\circ \Phi^{-1}_{l_v}\right)\circ\Psi_{k,l}.$$

\begin{ex}
For $k=2$ and $l=2^p\cdot o$ with $o$ odd, the bijection in Section \ref{sec:k2} corresponds to the decomposition $k=k_1=2$ and $l=l_1\cdot l_{p+1}$ with $l_1=\cdots=l_p=2$ and $l_{p+1}=o$.
\end{ex}

\begin{rem} The bijection depends on the decomposition $k=k_1\cdots k_r$ and $l=l_1\cdots l_s$. By permuting the integers in the product, the intermediate phases change and it should be interesting to see whether the final result remains the same or not. However,
by writing $k = \prod_{i\geq 1} p_i^{a_i}$ and $l= \prod_{i\geq 1} p_i^{b_i}$ where $p_i$ runs through prime numbers dividing $kl$, we claim that the optimal decomposition is 
\begin{align*}
k&=\prod_{b_i>0} \left(p_i^{\gcd(a_i,b_i)}\right)^{\frac{a_i}{\gcd(a_i,b_i)}} \times \prod_{b_i=0} p_i^{a_i},\\
 l&=\prod_{a_i>0} \left(p_i^{\gcd(a_i,b_i)}\right)^{\frac{b_i}{\gcd(a_i,b_i)}} \times \prod_{a_i=0} p_i^{b_i},
 \end{align*}
so that $k$ is a product of the integer $\prod_{b_i=0} p_i^{a_i}$ and the integers  $p_i^{\gcd(a_i,b_i)}$ repeated $\frac{a_i}{\gcd(a_i,b_i)}$ times for $b_i>0$, whereas $l$ is a product of the integer $\prod_{a_i=0} p_i^{b_i}$ and the integers  $p_i^{\gcd(a_i,b_i)}$ repeated $\frac{b_i}{\gcd(a_i,b_i)}$ times for $a_i>0$.
\end{rem}

\section{Concluding remarks}\label{sec:conclusion}

We conclude this paper with a discussion on the link between $k,l$-regular partitions and Schur's identity \cite{Sc26}.

\begin{theo}[Schur theorem reformulated]
At fixed weight, there are as many $3,2$-regular partitions as partitions whose frequency sequence $(f_i)_{i\geq 1}$ satisfies the following:

\begin{align*}
f_{i}&+f_{i+1}+f_{i+2}\leq 1,\\
f_{3i}&+f_{3i+1}+f_{3i+2}+f_{3i+3}\leq 1.
\end{align*}

\end{theo}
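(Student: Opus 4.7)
The plan is to recognize the statement as a frequency-language reformulation of Schur's classical theorem \cite{Sc26}, and to derive it by translating the two frequency conditions into the standard Schur gap conditions on successive parts.

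First, I would unpack $f_i+f_{i+1}+f_{i+2}\leq 1$ for all $i\geq 1$: among any three consecutive positive integers, at most one appears as a part, and does so at most once. Equivalently, the parts are distinct and any two of them differ by at least $3$, which is exactly the classical Schur rule forbidding successive differences $1$ and $2$.

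Next, I would interpret $f_{3i}+f_{3i+1}+f_{3i+2}+f_{3i+3}\leq 1$ in the presence of the first condition. Distinctness with gap at least $3$ already precludes simultaneous occurrences within each of the pairs $\{3i,3i+1\}$, $\{3i,3i+2\}$, $\{3i+1,3i+3\}$, $\{3i+2,3i+3\}$, so the genuinely new content is the exclusion of the pair $\{3i,3(i+1)\}$. Combining, the successive parts differ by at least $3$, with strict inequality whenever the smaller of the two is a multiple of $3$; equivalently, two consecutive multiples of $3$ never both occur, and multiples of $3$ differ by at least $6$.

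To conclude, I would observe that $3,2$-regular partitions are precisely the partitions into distinct parts not divisible by $3$, i.e.\ into distinct parts $\equiv \pm 1\pmod 3$. Schur's identity equates the number of such partitions of $n$ with the number of partitions of $n$ satisfying the gap conditions derived above, which finishes the proof. The substantive step is the frequency-to-gap dictionary; the rest is a direct appeal to Schur's theorem. The only care point is the boundary value $i=1$ in the second condition, which forbids the pair $\{3,6\}$, and this presents no difficulty.
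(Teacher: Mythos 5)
Your proposal is correct and matches the paper's intent: the paper gives no separate proof here, presenting the statement as a direct reformulation of Schur's classical identity \cite{Sc26}, and your frequency-to-gap translation (distinct parts differing by at least $3$, with no two consecutive multiples of $3$, matched against distinct parts $\not\equiv 0 \pmod 3$, i.e.\ $3,2$-regular partitions) is exactly the dictionary that justifies that reformulation.
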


In this spirit, using the weighted words, Alladi obtained a Schur-like identity related to over-partitions. An over-partition is a partition where each positive integer could occur once as an over-lined part, i.e. a part $\overline{i}$ has frequency $f_{\overline{i}}\in \{0,1\}$.

\begin{theo}[Alladi theorem reformulated]
At fixed weight, there are as many $4,2$-regular partitions as over-partitions whose parts' frequencies satisfy the following:

\begin{align*}
f_{\overline{i}}&=0 \quad \text{for} \quad i=1\text{ or }2\mid i,\\
f_{i}&+f_{\overline{i}}+f_{i+1}+f_{\overline{i+1}}+f_{i+2}+f_{\overline{i+2}}+f_{i+3}+f_{\overline{i+3}}\leq 1 \quad \text{for all}\quad 4\nmid i,\\
f_{i}&+f_{\overline{i}}+f_{i+1}+f_{\overline{i+1}}+f_{i+2}+f_{\overline{i+2}}+f_{i+3}+f_{\overline{i+3}}+f_{i+4}\leq 1 \quad \text{for all}\quad 4\mid i,\\
f_{\overline{i}}&+f_{i+1}+f_{\overline{i+1}}+f_{i+2}+f_{\overline{i+2}}+f_{i+3}+f_{\overline{i+3}}+f_{i+4}+f_{\overline{i+4}}\leq 1 \quad \text{for all}\quad 2\nmid i.
\end{align*}

\end{theo}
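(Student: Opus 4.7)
The plan is to derive this reformulated statement by comparing it against Alladi's original theorem, which is phrased in the weighted-words framework as a gap condition on successive parts of an over-partition. Alladi's result establishes that the $4,2$-regular partitions of $n$ are equinumerous with over-partitions of $n$ whose successive parts $\la_j>\la_{j+1}$ satisfy explicit minimal-difference constraints, the precise gap depending on the colors (overlined or not) and residues of the pair $(\la_j,\la_{j+1})$. Our task is therefore to verify that the three frequency-window inequalities displayed above are logically equivalent to Alladi's list of gap conditions.

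First I would recast each inequality as a local prohibition. For $4\nmid i$, the bound
$$f_i+f_{\overline{i}}+f_{i+1}+f_{\overline{i+1}}+f_{i+2}+f_{\overline{i+2}}+f_{i+3}+f_{\overline{i+3}}\leq 1$$
says that among the eight colored slots in the window $\{i,\ldots,i+3\}$, at most one is occupied; equivalently, any two successive parts both lying in $[i,i+3]$ must coincide, and otherwise their gap is at least $4$. The two remaining families extend this window by one slot: in the $4\mid i$ case the extra slot is $f_{i+4}$ (non-overlined only), and in the $2\nmid i$ case the extra slot is $f_{\overline{i}}$ (overlined only). Each such extension translates into a residue-sensitive tightening of the minimal gap by one, in specific color configurations.

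Next I would aggregate the per-window conditions into global gap conditions. For each ordered pair $(c,c')$ of colors attached to two consecutive parts $\la_j>\la_{j+1}$, and each residue class of $\la_{j+1}$ modulo $4$, I would compute the smallest gap $\la_j-\la_{j+1}$ permitted by the windows containing both parts, and check that the resulting list coincides with the minimal differences prescribed by Alladi. The auxiliary condition $f_{\overline{i}}=0$ for $i=1$ or $2\mid i$ restricts overlined parts to odd integers $\geq 3$, which is precisely the domain on which Alladi's overlined color is defined.

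The main obstacle is purely combinatorial bookkeeping: each pair of colors combined with the residue of the smaller part mod $4$ produces several subcases, and one must verify that the union of the three window families reproduces Alladi's table exactly---no case is missed and none is strengthened beyond the original. As a sanity check, one can compute the generating function of the over-partition side by iterating over window constraints from the largest part downwards and verify that it equals $\prod_{4\nmid i}(1+q^i)$, which by \Thm{theo:main} is the generating function of $\R_{4,2}$.
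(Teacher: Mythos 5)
Your proposal follows essentially the same route as the paper: there the theorem is presented as a reformulation of Alladi's known over-partition identity (obtained by a dilatation of the Alladi--Andrews--Gordon refinement of G\"ollnitz' theorem), so the only real content is the translation between the resulting difference conditions and the frequency-window inequalities, which is exactly the residue-and-color bookkeeping you propose, supplemented by the generating-function check against $\prod_{4\nmid i}(1+q^i)$. One small correction: a window sum $\leq 1$ forbids two parts in the window even when they are equal (so in particular all parts are automatically distinct), rather than forcing coinciding parts as your informal recasting suggests; your systematic minimal-gap computation per color pair and residue class would catch this.
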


This identity can be obtained using a dilatation on a refinement of G\"ollnitz' identity due to  Alladi, Andrews and Gordon \cite{AAG95}. In the framework of this refinement, Alladi, Andrews and Berkovich provide a further generalization of G\"ollnitz' identity in \cite{AAB03}. In Theorem 6 of their paper, by applying the transformation $q,a,b,c,d \mapsto q^5, q^{-4},q^{-3},q^{-2},q^{-1}$ and over-lining the parts  corresponding to the secondary parts except for those colored by $ad$, we derive a Schur-type identity related to the $5,2$-regular partitions. 

\begin{theo}[Alladi--Andrew--Berkovich theorem reformulated]\label{theo:AAB}
At fixed weight, there are as many $5,2$-regular partitions as pairs of partitions $(\mu,\nu)$, where $\mu$ is an over-partitions whose parts' frequencies satisfy 

\begin{align*}
&f_{\overline{i}}=0 \quad \text{for} \quad i=1,2\\
&f_{i}+f_{\overline{i}}+f_{i+1}+f_{\overline{i+1}}+f_{i+2}+f_{\overline{i+2}}+f_{i+3}+f_{\overline{i+3}}+f_{i+4}+f_{\overline{i+4}}\leq 1 \quad \text{for all}\quad i \equiv 2,3 \mod 5 ,\\
&f_{i}+f_{\overline{i-1}}+f_{\overline{i}}+f_{i+1}+f_{\overline{i+1}}+f_{i+2}+f_{\overline{i+2}}+f_{i+3}+f_{\overline{i+3}}+f_{i+4}\leq 1\quad \text{for all}\quad i \equiv 1,4 \mod 5 ,\\
&f_{i}+f_{i+1}+f_{\overline{i}}+f_{\overline{i+1}}+f_{i+2}+f_{\overline{i+2}}+f_{i+3}+f_{\overline{i+3}}+f_{i+4}+f_{\overline{i+4}}+f_{i+5}\leq 1\quad \text{for all}\quad i \equiv 0 \mod 5 ,\\
&f_{\overline{i}}+f_{i+1}+f_{\overline{i+1}}+f_{i+2}+f_{\overline{i+2}}+f_{i+3}+f_{\overline{i+3}}+f_{i+4}+f_{\overline{i+4}}+f_{i+5}+f_{\overline{i+5}}\leq 1 \quad \text{for all } i \equiv 1,2,4 \mod 5,\\
&f_{\overline{i}}+f_{\overline{i+1}}+f_{i+2}+f_{\overline{i+2}}+f_{i+3}+f_{\overline{i+3}}+f_{i+4}+f_{\overline{i+4}}+f_{i+5}+f_{i+6}+f_{\overline{i+5}}\leq 1 \quad \text{for all}\quad i \equiv 0,3 \mod 5,
\end{align*}
and $\nu$ is a partition into parts divisible by $5$ and at least equal to $20+10(\sum_{i\geq 1} f_i+f_{\overline{i}})-\chi(1 \text{ is a part of }\mu)$.
Here, $\chi(A)$ equals $1$ if $A$ is true and $0$ otherwise.
\end{theo}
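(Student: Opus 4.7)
The plan is to derive the identity as a specialization of Theorem 6 of \cite{AAB03}, a multi-colored refinement of G\"ollnitz' identity. In its weighted-words formulation, the sum side of that theorem enumerates finite sequences built from ten ``letters''---four primary colors $a,b,c,d$ and six secondary colors given by ordered products $xy$---subject to explicit gap conditions, while the product side is a G\"ollnitz-style infinite product in the variables $q,a,b,c,d$. I would apply the dilation $q\mapsto q^5$, $(a,b,c,d)\mapsto (q^{-4},q^{-3},q^{-2},q^{-1})$. Under this substitution the primary colors $a,b,c,d$ carry parts congruent to $1,2,3,4 \pmod 5$ respectively, the secondary colors other than $ad$ carry parts in the remaining non-zero residues modulo $5$, and the exceptional secondary color $ad$ dilates to $q^5$, so it carries positive multiples of $5$.

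First, I would verify that after this substitution the product side reduces to $\prod_{5\nmid i}(1+q^i)$, which by \Thm{theo:main} is the generating function of $\R_{5,2}$; this is a routine manipulation of the AAB product formula. I would then interpret the transformed sum side combinatorially: primary-letter parts become the un-overlined $f_i$ with $5\nmid i$, non-$ad$ secondary-letter parts become the over-lined $f_{\overline{i}}$ with $5\nmid i$, and the $ad$-letter parts---being multiples of $5$---are collected into the auxiliary partition $\nu$. The bound on the smallest part of $\nu$ is exactly the image under the dilation of the AAB minimum-value rule for $ad$-letters: the base minimum is $4$ (dilating to $20$), each additional primary or secondary letter forces a $+2$ shift (dilating to $+10$), and the $-\chi(1\text{ is a part of }\mu)$ correction reflects the lowered base when the smallest primary letter has value $1$. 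The conditions $f_{\overline{1}}=f_{\overline{2}}=0$ come from the fact that the smallest allowed secondary letters $ab$ and $ac$ dilate to $q^3$ and $q^4$.

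Finally, I would unfold the gap conditions of AAB's Theorem 6 into the five displayed inequalities. For each residue class $i \pmod 5$, the list of allowed letters in a window starting at $i$ is determined by which primary/secondary letters occupy that residue and its immediate successors; the AAB rule (difference at least $1$ between consecutive letters, at least $2$ after a secondary) becomes, under dilation, an upper bound of $1$ on a sum of consecutive frequencies spanning a window of length $5$ or $6$, the longer window appearing exactly when the window straddles a multiple of $5$, since the corresponding $ad$-letter is extracted into $\nu$ rather than counted against $\mu$. The main obstacle will be this last step: a careful residue-by-residue tabulation is required to confirm that the ten letter types of AAB, with their minimum values and successor restrictions, translate exactly into the five inequalities stated, including the subtle alternation between window lengths $5$ and $6$ and the precise placement of over-lined versus un-overlined indices inside each window.
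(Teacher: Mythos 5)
Your route is the paper's route: the paper establishes this statement only through the one-line recipe of dilating Theorem 6 of \cite{AAB03} by $q,a,b,c,d \mapsto q^5,q^{-4},q^{-3},q^{-2},q^{-1}$ and over-lining the secondary parts except those colored $ad$, and your three verification steps (product side, letter dictionary, residue-by-residue unfolding of the difference conditions) are exactly what fleshing out that recipe requires. (A minor point: you do not need \Thm{theo:main} to identify $\prod_{5\nmid i}(1+q^i)$ as the generating function of $\R_{5,2}$; that is immediate from the definition of $5,2$-regularity.)

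However, your letter dictionary contains a concrete error that would derail the tabulation you defer to the final step. Under the stated substitution the six secondary colors dilate as $ab\mapsto q^{-7}$, $ac\mapsto q^{-6}$, $bc\mapsto q^{-5}$, $ad\mapsto q^{-5}$, $bd\mapsto q^{-4}$, $cd\mapsto q^{-3}$, so it is false that ``the secondary colors other than $ad$ carry parts in the remaining non-zero residues modulo $5$'': the color $bc$ lands, like $ad$, on the multiples of $5$. Consequently the over-lined parts of $\mu$ are \emph{not} confined to $5\nmid i$; the $bc$-parts become over-lined multiples of $5$, and the stated inequalities do involve such frequencies (for instance $f_{\overline{i+4}}$ when $i\equiv 1 \pmod 5$, or $f_{\overline{i+5}}$ in the last condition for $i\equiv 0 \pmod 5$). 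Your rule ``multiples of $5$ are collected into $\nu$'' therefore conflates two distinct colors occupying the same residue class: only the un-overlined $ad$-parts are extracted into $\nu$ with the minimum-part condition $20+10(\sum_{i\geq 1} f_i+f_{\overline{i}})-\chi(1 \text{ is a part of }\mu)$, while the $bc$-parts remain in $\mu$. The over-lining is precisely the device that separates secondary from primary letters residue by residue ($bd$ vs.\ $a$ at residue $1$, $cd$ vs.\ $b$ at $2$, $ab$ vs.\ $c$ at $3$, $ac$ vs.\ $d$ at $4$) and $bc$ from $ad$ at residue $0$; likewise your heuristic that the length-$6$ windows occur ``exactly when the window straddles a multiple of $5$ because the $ad$-letter is extracted'' must be replaced by a direct translation of the AAB gap conditions, in which the $bc$-parts at residue $0$ still participate. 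With the dictionary corrected, the remainder of your plan coincides with the derivation the paper asserts.
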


In a recent paper \cite{IK21}, we provide a bijective proof of the result of Alladi, Andrews and Berkovich using an intermediate identity. By applying the transformation we did for their result, the following companion of Theorem \ref{theo:AAB} derives from Theorem 1.6 of our paper. 

\begin{theo}
At fixed weight, there are as many $5,2$-regular partitions as over-partitions whose parts' frequencies satisfy the following: 

\begin{align*}
&f_{\overline{i}}=0 \quad \text{for} \quad i=1,2\\
&f_{i}+f_{\overline{i}}+f_{i+1}+f_{\overline{i+1}}+f_{i+2}+f_{\overline{i+2}}+f_{i+3}+f_{\overline{i+3}}+f_{i+4}+f_{\overline{i+4}}\leq 1 \quad \text{for all}\quad i \equiv 2,3 \mod 5 ,\\
&f_{i}+f_{\overline{i-1}}+f_{\overline{i}}+f_{i+1}+f_{\overline{i+1}}+f_{i+2}+f_{\overline{i+2}}+f_{i+3}+f_{\overline{i+3}}+f_{i+4}\leq 1\quad \text{for all}\quad i \equiv 1,4 \mod 5 ,\\
&f_{i}+f_{i+1}+f_{\overline{i}}+f_{\overline{i+1}}+f_{i+2}+f_{\overline{i+2}}+f_{i+3}+f_{\overline{i+3}}+f_{i+4}+f_{\overline{i+4}}+f_{i+5}\leq 1\quad \text{for all}\quad i \equiv 0 \mod 5 ,\\
&f_{\overline{i}}+f_{i+1}+f_{\overline{i+1}}+f_{i+2}+f_{\overline{i+2}}+f_{i+3}+f_{\overline{i+3}}+f_{i+4}+f_{\overline{i+4}}+f_{i+5}+f_{\overline{i+5}}\leq 1 \quad \text{for all } i \equiv 1,2,4 \mod 5,\\
&f_{\overline{i}}+f_{\overline{i+1}}+f_{i+2}+f_{\overline{i+2}}+f_{i+3}+f_{\overline{i+3}}+f_{i+4}+f_{i+5}+f_{i+6}+f_{\overline{i+5}}\leq 1 \quad \text{for all}\quad i \equiv 3 \mod 5,\\
&f_{\overline{i}}+f_{\overline{i+1}}+f_{i+2}+f_{\overline{i+2}}+f_{i+3}+f_{\overline{i+3}}+f_{i+4}+f_{\overline{i+4}}+f_{i+6}+f_{\overline{i+5}}\leq 1 \quad \text{for all}\quad i \equiv 0 \mod 5,\\
&f_{5i-1}+f_{\overline{5i+3}}+f_{\overline{5i+7}}\leq 2 \quad \text{for all }  i\geq 1 ,\\
&f_{5i-2}+f_{\overline{5i+3}}+f_{\overline{5i+7}}\leq 2 \quad \text{for all }  i\geq 1 ,\\
&f_{5i-4}+f_{\overline{5i}}+f_{5i+5}\leq 2 \quad \text{for all }  i\geq 2 .\\
\end{align*}
\end{theo}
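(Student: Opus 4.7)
The plan is to derive this identity from Theorem~1.6 of \cite{IK21} by the same dilation procedure that yielded Theorem \ref{theo:AAB} from Theorem~6 of \cite{AAB03}. Theorem~1.6 of \cite{IK21} is formulated in the weighted words framework as an intermediate refinement of the G\"ollnitz-type identity used by Alladi--Andrews--Berkovich, counting colored partitions in the four colors $a,b,c,d$ subject to local frequency constraints on windows of consecutive colored integers, together with a short list of secondary-part conditions. Being intermediate, it is strictly finer than the AAB version, so after specialization it carries extra constraints that explain the additional inequalities appearing at the bottom of the statement.

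I would first apply the specialization $q\mapsto q^5$, $a\mapsto q^{-4}$, $b\mapsto q^{-3}$, $c\mapsto q^{-2}$, $d\mapsto q^{-1}$ and trace how each color class lands in a prescribed residue class modulo $5$: a part of size $n$ with color $a$ contributes $q^{5n-4}$ and thus sits in $1\bmod 5$; colors $b,c,d$ give residues $2,3,4\bmod 5$; and the multi-color products $ab,ac,ad,bc,bd,cd,\ldots$ populate the remaining residues, so in particular $ad$ gives $0\bmod 5$. The product side of Theorem~1.6 of \cite{IK21}, which on the weighted level factors as an explicit infinite product in the colored variables, then collapses to the generating function of $5,2$-regular partitions by exactly the same infinite-product computation already used to derive Theorem~\ref{theo:AAB}. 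Next I would over-line the secondary parts exactly as before, namely attach a bar to every secondary part whose color type is not the excluded one (the analogue of $ad$ in \cite{AAB03}); the bars then mark whether a given integer plays the primary or the secondary role, and the restrictions $f_{\overline{i}}=0$ for $i=1,2$ come from the fact that the smallest admissible secondary colored parts are forced to lie in residues $3,4\bmod 5$.

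The main obstacle will be a careful bookkeeping: the long list of inequalities in the statement must be matched, residue class by residue class, to the local frequency constraints of Theorem~1.6 after specialization. Each window $\{i,i+1,\ldots,i+5\}$ in the stated inequalities is the image of a window in the weighted-words identity whose length depends on the color pattern being excluded in the corresponding clause, and the three final inequalities of the form $f_{5i-1}+f_{\overline{5i+3}}+f_{\overline{5i+7}}\le 2$ and their analogues correspond precisely to the extra secondary-part clauses of \cite{IK21} that do not appear in AAB. A subtle point to handle is that the auxiliary partition $\nu$ present in Theorem~\ref{theo:AAB}, which counted parts divisible by $5$ with a lower bound depending on the rest of the data, does not appear here: in Theorem~1.6 of \cite{IK21} the corresponding factor is already absorbed into the colored generating function, so after dilation it merges into the over-partition side instead of surviving as a separate component, and this is exactly what converts the AAB-type ``pair of partitions'' statement into the single over-partition statement claimed above.
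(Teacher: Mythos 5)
Your plan matches the paper's own (very brief) justification of this theorem: the paper simply derives it from Theorem 1.6 of \cite{IK21} by applying the same dilation $q,a,b,c,d\mapsto q^{5},q^{-4},q^{-3},q^{-2},q^{-1}$ and the same over-lining of secondary parts used for Theorem \ref{theo:AAB}, which is exactly the route you propose. The additional bookkeeping you sketch (residue tracking of the color classes, the final $\leq 2$ inequalities coming from the extra clauses of the intermediate identity, and the former $\nu$-component being absorbed into the over-partition) is consistent with that derivation, so there is no gap relative to what the paper itself does.
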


A further investigation in \cite{IK22} leads to an Alladi--Andrews--Berkovich-type identity for several many primary colors. Hence, in Theorem 1.9, by applying the transformation $q,a_1,\ldots,a_{k-1}\mapsto q^{k},q^{1-k},\ldots,q^{-1}$, one should obtain a Schur-type identity involving $k,2$-regular partitions. Nonetheless, for $k\geq 6$, a explicit enumeration of the partitions satisfying the difference condition is intricate. The difficulty is twofold. First, the obtained parts overlap in terms of congruence modulo $k$ and are not well-ordered in terms of size. In addition, we describe the difference condition in terms of forbidden patterns whose length is not bounded for more than $4$ primary colors. A subsequent research could then consists in finding a suitable approach to describe these partitions.

\end{document}